\def\End{\mathop {\fam 0 End}\nolimits}
\def\Ker{\mathop {\fam 0 Ker}\nolimits}
\def\tr{\mathop {\fam 0 tr}\nolimits}
\def\idd{\mathop {\fam 0 id}\nolimits}
\def\dbl{\lbrace\kern-3pt\lbrace}
\def\dbr{\rbrace\kern-3pt\rbrace}
\def\dblb{[\kern-2pt[}
\def\dbrb{]\kern-2pt]}
\newtheorem{lem}{Lemma}
\newtheorem{cor}{Corollary}
\newtheorem{prop}{Proposition}
\newtheorem{thm}{Theorem}
\theoremstyle{definition}
\newtheorem{defn}{Definition}
\newtheorem{exmp}{Example}
\newtheorem{rem}{Remark}
\title{Simple finite-dimensional double algebras}
\author{M.~E. Goncharov, P.~S. Kolesnikov}
\address{Sobolev Institute of Mathematics, Novosibirsk, Russia}
\thanks{Supported by Russian Science Foundation (project 14-21-00065)}
\begin{document}

\begin{abstract}
A double algebra is a linear space $V$ equipped with linear map $V\otimes V\to V\otimes V$. 
Additional conditions on this map lead to the notions of Lie and associative double algebras. 
We prove that simple finite-dimensional Lie double algebras do not exist over an arbitrary field,
and all simple finite-dimensional associative double algebras over an algebraically closed field 
are trivial. 
Over an arbitrary field, every simple finite-dimensional associative double algebra is commutative.
A double algebra structure on a finite-dimensional space $V$ 
is naturally described by a linear operator $R$ on the algebra $\End V$ of linear transformations of~$V$. 
Double Lie algebras correspond in this sense to 
skew-symmetric Rota---Baxter operators, double associative algebra structures---to (left) averaging operators.
\end{abstract}

\maketitle

\section{Introduction}

The general philosophy of noncommutative geometry which goes back to M. Kontsevich
states that a noncommutative geometric structure on an associative algebra $A$ should turn 
into an ordinary geometric structure on the variety of $n$-dimensional representations of $A$
under the functor $\mathrm{Rep}_n$ from the category of associative algebras to the category 
of schemes. 
In particular, the notion of a double Poisson algebra introduced in \cite{CBEG2007} and \cite{VdB2008}
fits this ideology (however, this approach is different from the one in \cite{Kon93}). 

Namely, suppose $A$ is a finitely generated associative algebra over a field $\Bbbk $, $n\ge 1$, and let 
$\mathcal O(\mathrm{Rep}_n(A))$ be the algebra of regular functions on the variety of all $n$-dimensional 
representations of $A$. This affine algebra is generated by functions $x^a_{ij}$, $a\in A$, $i,j=1,\dots, n$, where 
$\rho(a)_{ij} = x_{ij}^a(\rho)$ for every representation $\rho: A\to M_n(\Bbbk )$. 
A {\em double Poisson bracket} on $A$ is a linear map 
\[
 \dblb \cdot , \cdot \dbrb : A\otimes A \to A\otimes A
\]
satisfying a series of identities similar to anti-commutativity, Jacobi identity, and Leibniz rule.
A double Poisson bracket on $A$ induces ordinary Poisson bracket on $\mathcal O(\mathrm{Rep}_n(A))$
by the rule $\{x_{ij}^a,x_{kl}^b\} = \sum x_{il}^{c_{(1)}}x_{kj}^{c_{(2)}}$ for 
$\dblb a,b\dbrb = \sum c_{(1)}\otimes c_{(2)}\in A\otimes A$, $a,b\in A$ 
(it is enough to define Poisson bracket 
on the generators).

The ``double analogues'' of anti-commutativity and Jacobi identity involve only double bracket 
and do not involve the product in $A$. It was proposed in \cite{DSKV2015} to 
define {\em Lie double algebras} as linear spaces equipped with such double brackets.

This work was inspired by a problem stated by Victor Kac in his talk on the conference ``Lie and Jordan algebras, 
their representations and applications'' 
dedicated to Efim Zelmanov's 60th birthday (Bento Goncalves, Brasil, December 2015): 
prove that simple finite-dimensional Lie double algebras do not exist. 
In this paper, we present a solution of this problem. We also define associative double algebras
in such a way that its double commutator algebra is a Lie one. It turns out that, over an 
algebraically closed field, the only simple finite-dimensional double associative algebras are 1-dimensional;
over an arbitrary field, such system may exist but they are all commutative.

\section{Double associative and Lie algebras}

A {\em double algebra} is a linear space $V$ equipped with a linear map (called {\em double bracket})
$\dbl \cdot ,\cdot \dbr : V\otimes V\to V\otimes V$. 
It is clear how to define subalgebras and homomorphisms 
of double algebras. Ideals of a double algebra are supposed to be kernels of homomorphisms, 
so they have to be subspaces $I\subseteq V$ such that $\dbl V,I\dbr + \dbl I,V\dbr \subseteq I\otimes V + V\otimes I$. 
A double algebra $V$ is said to be simple if $\dbl V,V\dbr \ne 0$ and there are no nonzero proper ideals in $V$.

There is a natural way to extend a double bracket on $V$ to the following four linear maps (see \cite{DSKV2015})
$V^{\otimes 3}\to V^{\otimes 3}$: 
\[
\begin{gathered}
a\otimes b\otimes c \mapsto  \dbl a, b\otimes c \dbr _L = \dbl a,b \dbr \otimes c, \\
a\otimes b\otimes c \mapsto  \dbl a, b\otimes c\dbr _R = (b\otimes \dbl a,c\dbr )^{(12)}, \\
a\otimes b\otimes c \mapsto  \dbl a\otimes b, c\dbr _L = (\dbl a,c\dbr \otimes b)^{(23)}, \\
a\otimes b\otimes c \mapsto  \dbl a\otimes b, c \dbr _R = a\otimes \dbl b,c\dbr 
\end{gathered}
\]
for $a,b,c\in V$. Hereanafter $u^\sigma $ for $u\in V^{\otimes n}$, $\sigma \in S_n$ stands 
for the permutation of tensor factors.

\begin{defn}[\cite{DSKV2015}]\label{defn:dLie}
 A double algebra $L$ is said to be a {\em Lie} one if 
 \[
  \dbl a,b\dbr =- \dbl b,a\dbr ^{(12)}, \quad \dbl a, \dbl b,c\dbr \dbr _L -\dbl b, \dbl a,c\dbr \dbr _R^{(12)} = \dbl \dbl a,b\dbr,c\dbr _L
 \]
for all $a,b,c\in L$.
\end{defn}

\begin{defn}\label{defn:dAss}
 A double algebra $V$ is said to be {\em associative} if 
 \[
  \dbl a, \dbl b,c\dbr \dbr _L = \dbl \dbl a,b\dbr ,c\dbr _L, \quad \dbl a,\dbl b,c\dbr \dbr _R = \dbl \dbl a,b\dbr ,c\dbr _R
 \]
for all $a,b,c \in V$.

If, in addition, $\dbl a,b \dbr= \dbl b,a\dbr ^{(12)}$ then $V$ is a {\em commutative double algebra}.
\end{defn}

\begin{rem}
 Our definition for $\dbl a, b\otimes c\dbr _R$ is slightly different from analogous one in \cite{DSKV2015}, 
 but we also change the ``double analogue'' of Jacobi identity to get the same notion of a double Lie algebra as in \cite{DSKV2015}.
\end{rem}

\begin{exmp}
For a linear space $V$, define $\dbl u,v\dbr = u\otimes v$ for $u,v\in V$.
This turns $V$ into an associative and commutative double algebra denoted $V_c$.
\end{exmp}

\begin{exmp}
Given a double algebra $V$, define new (opposite) double bracket by
$\dbl u,v\dbr^{op}=\dbl u,v\dbr^{(12)}$, $u,v\in V$. Denote the double algebra obtained by $V^{op}$. 
If $V$ is associative (Lie, commutative) then so is $V^{op}$.
\end{exmp}

\begin{exmp}\label{exmp:2dimNonCom}
Let $V = \Bbbk ^2$ with standard basis $e_1$, $e_2$. Define $\dbl e_1,e_1\dbr = e_1\otimes e_2$, 
and let the other double products be zero. The double algebra $V_2$ obtained is associative and non-commutative.
\end{exmp}

\begin{exmp}
 Let $V$ be a linear space. Choose a linear map $\varphi \in \End V$ such that $\varphi^2=0$ and define
 \[
  \dbl u,v \dbr = \varphi(u)\otimes v + u\otimes \varphi(v), \quad u,v\in V.
 \]
The system obtained is an associative and commutative double algebra.
\end{exmp}

\begin{exmp}
 Let $A$ be an associative algebra, and let $Z$ be a subspace of $\{\varphi \in \End A \mid \varphi(xy)=x\varphi(y)\, x,y\in A\}$
 with a fixed basis $\varphi_1,\dots, \varphi_n$.
 Consider $V=Z\oplus A$ equipped with double bracket 
\[
 \begin{gathered}
  \dbl x,y\dbr = \sum\limits_{i=1}^n  \varphi_i(x)y\otimes\varphi_i, \quad x,y\in A, \\
  \dbl x,\varphi \dbr = \dbl \varphi, x\dbr = 0,\quad x\in V,\ \varphi\in Z.
 \end{gathered}
\]
Then $V$ is an associative, noncommutative double algebra. 
Example \ref{exmp:2dimNonCom} is a particular case of such $V$ with $A=\Bbbk $.
\end{exmp}

\begin{lem}\label{lem:dCommutator}
 If $V$ is a associative double  algebra with double bracket $\dbl \cdot , \cdot\dbr $. 
 Then the same space $V$ equipped with double bracket 
 \[
  \dblb a,b\dbrb  = \dbl a,b\dbr - \dbl b,a \dbr ^{(12)}, \quad a,b\in V,
 \]
is a Lie double  algebra denoted by $V^{(-)}$.
\end{lem}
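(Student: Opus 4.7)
The skew-symmetry $\dblb a,b\dbrb = -\dblb b,a\dbrb^{(12)}$ is immediate: apply $(12)$ to $\dblb b,a\dbrb = \dbl b,a\dbr - \dbl a,b\dbr^{(12)}$ and use the fact that $(12)$ is an involution, yielding $\dblb b,a\dbrb^{(12)} = \dbl b,a\dbr^{(12)} - \dbl a,b\dbr = -\dblb a,b\dbrb$.

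For the double Jacobi identity, my plan is to mimic the classical proof that the commutator of an associative product is a Lie bracket. Set
\[
E = \dblb a,\dblb b,c\dbrb\dbrb_L - \dblb b,\dblb a,c\dbrb\dbrb_R^{(12)} - \dblb\dblb a,b\dbrb,c\dbrb_L;
\]
we aim to show $E = 0$. Substituting $\dblb u,v\dbrb = \dbl u,v\dbr - \dbl v,u\dbr^{(12)}$ into each of the three summands, at both the inner and the outer bracket, splits each summand into $2\times 2 = 4$ pieces, giving twelve pieces of $E$ in total. Two of them, namely $+\dbl a,\dbl b,c\dbr\dbr_L$ from the first summand and $-\dbl\dbl a,b\dbr,c\dbr_L$ from the third, cancel at once by the $L$-associativity axiom of Definition~\ref{defn:dAss}. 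The remaining ten pieces must cancel in five pairs.

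The chief obstacle is the permutation bookkeeping needed to match up the ten leftover pieces. The key auxiliary identities, which one verifies directly from the definitions of the four operations, are
\[
\dbl a, X^{(12)}\dbr_L = (\dbl a,X\dbr_R)^{(23)}, \qquad \dbl X^{(12)}, a\dbr_L = (\dbl X,a\dbr_R)^{(12)},
\]
for $X\in V^{\otimes 2}$ and $a\in V$, together with analogous identities obtained by swapping $L$ and $R$. Using these and the $R$-associativity axiom, each of the ten leftover pieces is rewritten in the canonical form $(\dbl\dbl\alpha,\beta\dbr,\gamma\dbr_L)^\sigma$ or $(\dbl\dbl\alpha,\beta\dbr,\gamma\dbr_R)^\sigma$ for some ordering $(\alpha,\beta,\gamma)$ of $(a,b,c)$ and some $\sigma\in S_3$. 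A tabulation then shows the ten pieces split into five pairs with opposite signs and matching canonical form, giving $E = 0$. A less conceptual but quicker alternative is to fix a basis of $V$, write the double bracket via structure constants $c^{kl}_{ij}$, and verify the equality of the resulting polynomial expressions in the $c^{kl}_{ij}$ coefficient by coefficient; the two associativity axioms translate into precisely the scalar identities needed for the cancellations.
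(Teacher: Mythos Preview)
Your argument is correct and is precisely the ``straightforward computation'' the paper invokes in its first sentence; you simply supply more of the bookkeeping (the auxiliary identities $\dbl a,X^{(12)}\dbr_L=(\dbl a,X\dbr_R)^{(23)}$ etc.\ are indeed the right tools, and the twelve-into-six-pairs count matches the classical commutator-Jacobi pattern).

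The paper, however, also points to a second proof in the finite-dimensional case that you do not mention and that is more in the spirit of the paper. By Theorem~\ref{thm:OP_RB_ave} the original bracket corresponds to an operator $R$ on $\End V$ with both $R$ and $R^*$ left averaging, while the commutator bracket $\dblb\cdot,\cdot\dbrb$ corresponds to $R-R^*$; Corollary~\ref{cor:AveRB_op} then says $R-R^*$ is a skew-symmetric Rota--Baxter operator, which by Theorem~\ref{thm:OP_RB_ave}(1) is exactly the condition for a Lie double structure. This route trades the twelve-term tabulation for a one-line operator identity and ties the lemma directly into the paper's central correspondence, at the price of assuming $\dim V<\infty$. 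Your structure-constant alternative is closer in spirit to the direct expansion than to this operator argument.
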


\begin{proof}
 This may be shown in a straightforward computation. In finite-dimensional case, this statement independently follows 
 from a relation between averaging and Rota---Baxter operators on $\End V$ which will be considered below.
\end{proof}

Let $V$ and $U$ be two double algebras. 
Then $V\otimes U$ is also a double algebra with respect to a double bracket given by 
\[
 \dbl v_1\otimes u_1 , v_2\otimes u_2\dbr  = (\dbl v_1,v_2\dbr \otimes \dbl u_1,u_2\dbr )^{(23)} \in V\otimes U\otimes V\otimes U.
\]

Let $V$ be a Lie (associative, commutative) double algebra, and let $U$ be a commutative double  algebra.
Then $V\otimes U$ is a double Lie (resp., associative, commutative) algebra.
Indeed, it is enough to compute $\dbl \dbl a,b\dbr ,c \dbr _{L,R}$ and 
$\dbl a,\dbl b,c\dbr\dbr _{L,R}$ for $a,b,c\in V\otimes U$. 

If $V$ is a finite-dimensional double algebra then the conjugate map $\dbl \cdot , \cdot\dbr^*$
determined a double algebra structure on the dual space $V^*$. If $V$ is Lie or commutative double algebra 
then so is $V^*$, but for an associative double algebra $V$ this is not true.

\begin{exmp}
Consider the space $\Bbbk^2$ equipped with a double product $\dblb e_1,e_1\dbrb = e_1\otimes e_2-e_2\otimes e_1$
(others are zero). This is a Lie double algebra $L_2$ isomorphic to $V_2^{(-)}$.

In contrast, $L^*_2=\Bbbk^2$ with $\dblb e_1,e_2\dbrb = e_1\otimes e_1 = -\dblb e_2,e_1\dbrb$
is also a Lie double algebra, but it cannot be presented as $V^{(-)}$ for an associative double algebra~$V$.
\end{exmp}

\begin{exmp}[\cite{VdB2008}]
The space $P_1=\Bbbk[t]$ equipped with 
\[
\dblb t^n,t^m\dbrb = \frac{(t^n\otimes 1-1\otimes t^n)(t^m\otimes 1-1\otimes t^m)}{t\otimes 1-1\otimes t}
\]
is a Lie double algebra.
\end{exmp}

\begin{exmp}[Communicated by Victor Kac]
Consider the Lie double  algebra
 $dY(N)= P_1\otimes V_c^{op}\otimes V_c$, 
 where 
 $V=\Bbbk^ N$.
Its multiplication table relative to the basis
$T_n^{ij} =t^n\otimes e_i\otimes e_j$ ($n\ge 0$, $i,j=1,\dots, N$) has the following form:
\[
\dblb T_m^{ij}, T_n^{kl}\dbrb =
\sum\limits_{r=0}^{\min \{m,n\}-1} \big (T_r^{kj}\otimes T^{il}_{m+n-r-1}
 - T^{kj}_{m+n-r-1}\otimes T^{il}_r \big )
\]
It worths mentioning that these relations verbally repeat the 
defining relations of the Yangian $Y(gl_N)$:
\[
[ T_m^{ij}, T_n^{kl} ] =
\sum\limits_{r=0}^{\min \{m,n\}-1} \big (T_r^{kj} T^{il}_{m+n-r-1}
 - T^{kj}_{m+n-r-1} T^{il}_r \big )
\]
\end{exmp}

The relation between double Lie algebras and the classical Yang---Baxter equation has a very natural and precise 
form in the finite-dimensional case (c.f. \cite{ORS_2014}).
Suppose $V$ is a finite-dimensional space. Recall that the associative algebra $\End V$
of all linear transformations of $V$ has a symmetric bilinear nondegenerate form (trace form) 
\[
 \langle \cdot , \cdot \rangle : \End V\otimes \End V \to \Bbbk 
\]
given by $\langle x,y\rangle =\tr(xy)$, $x,y\in \End V$.
This form is invariant, i.e., $ \langle xy,z\rangle = \langle x,yz\rangle = \langle y,zx\rangle$.
Fix a linear isomorphism $\iota :\End V\to (\End V)^*$ given by
\[
 \langle \iota (x), y\rangle =\langle x,y\rangle 
\]
(here in the left-hand side $\langle \cdot ,\cdot \rangle $ denotes the natural pairing).

Recall that for every finite-dimensional space $W$ we may identify $\End W$ and $W^*\otimes W$
in the following way:
\[
 (\varphi \otimes x): y\mapsto \langle \varphi,z\rangle y,\quad x,y\in W,\ \varphi\in W^*. 
\]
The latter allows to identify $\End V\otimes \End V$ and $\End(\End V)$ by means of the trace form.
Thus we have a chain of isomorphisms
\begin{multline}\label{eq:OP_iso}
 \End(V\otimes V) \simeq (V\otimes V)^*\otimes V\otimes V \simeq V^{*}\otimes V \otimes V^*\otimes V \\
  \simeq \End V\otimes \End V\simeq \End(\End V). 
\end{multline}
Therefore, the space of double brackets on $V$ is isomorphic to the space $\End(\End V)$, i.e., every double algebra structure $\dbl \cdot,\cdot\dbr \in \End(V\otimes V)$ 
is determined by a linear operator $R:\End V\to \End V$. Tracking back the chain \eqref{eq:OP_iso} we obtain an 
explicit expression for a double bracket in terms of operators:
\begin{equation}\label{eq:Bracket_via_OP}
 \dbl a,b\dbr  = \sum\limits_{i=1}^N e_i(a)\otimes R(e_i^*)(b) = \sum\limits_{i=1}^N R^*(e_i)(a)\otimes e^*_i(b), \quad a,b\in V, 
\end{equation}
where $e_1,\dots, e_N$ is a linear basis of $\End V$, $e_1^*,\dots, e_N^*$ is the corresponding dual basis 
relative to the trace form,  $R^*$ denotes the conjugate operator on $\End V$ relative to the trace form.

\begin{lem}\label{lem:op_Equivalence}
Let $A$ be a finite-dimensional algebra (not necessarily associative) 
equipped with a symmetric bilinear invariant nondegenerate form $\langle \cdot,\cdot \rangle $.
Suppose $R$ is a linear operator on $A$ and $R^*$ stands for its conjugate.
Then either of the following three identities is equivalent to other two:
\begin{gather}
 R(x)R(y) = R(R(x)y),\label{eq:lAve-1} \\
 R^*(x)R(y) = R^*(xR(y)), \label{eq:lAve-2} \\
 R^*(R(x)y) = R^*(xR^*(y)). \label{eq:lAve-3}
\end{gather}
Moreover, the same holds for the following triple of identities:
\begin{gather}
 R^*(x)R^*(y) = R^*(R^*(x)y), \label{eq:lAve-1d} \\
 R(x)R^*(y) = R(xR^*(y)), \label{eq:lAve-2d} \\
 R(R^*(x)y) = R(xR(y)). \label{eq:lAve-3d}
\end{gather}
\end{lem}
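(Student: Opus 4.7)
The plan is to reduce each identity to a statement about scalar pairings with an arbitrary test element $z\in A$, then shuffle factors by invariance and transfer $R$ across the pairing by definition of $R^*$. Nondegeneracy of $\langle\cdot,\cdot\rangle$ lets me then cancel $z$ to recover an equivalent operator identity. The key computational fact I will use repeatedly, derived from symmetry together with the stated invariance $\langle xy,z\rangle=\langle x,yz\rangle$, is $\langle xy,z\rangle=\langle y,zx\rangle$.

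First I would establish \eqref{eq:lAve-1}$\Leftrightarrow$\eqref{eq:lAve-2}. Pairing \eqref{eq:lAve-1} with an arbitrary $z$ gives $\langle R(x)R(y),z\rangle=\langle R(R(x)y),z\rangle$. Using invariance on the left in the form $\langle ab,c\rangle=\langle b,ca\rangle$ yields $\langle R(y),zR(x)\rangle=\langle y,R^*(zR(x))\rangle$, while on the right, transferring $R$ via $R^*$ gives $\langle R(x)y,R^*(z)\rangle=\langle y,R^*(z)R(x)\rangle$. By nondegeneracy this amounts to $R^*(zR(x))=R^*(z)R(x)$ for all $x,z$, which is \eqref{eq:lAve-2} after relabeling. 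Similarly for \eqref{eq:lAve-2}$\Leftrightarrow$\eqref{eq:lAve-3}: pair \eqref{eq:lAve-3} with $z$, rewrite $\langle R^*(R(x)y),z\rangle=\langle R(x)y,R(z)\rangle=\langle x,R^*(yR(z))\rangle$ and $\langle R^*(xR^*(y)),z\rangle=\langle xR^*(y),R(z)\rangle=\langle x,R^*(y)R(z)\rangle$; nondegeneracy gives exactly \eqref{eq:lAve-2} (with variables renamed). This closes the first triangle of equivalences.

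The second triple is obtained for free by applying the first part with $R$ replaced by $R^*$, noting that $R^{**}=R$ because the form is nondegenerate: identity \eqref{eq:lAve-1d} is \eqref{eq:lAve-1} for $R^*$, identity \eqref{eq:lAve-3d} is \eqref{eq:lAve-3} for $R^*$, and \eqref{eq:lAve-2d} is \eqref{eq:lAve-2} for $R^*$ (with the roles of $R$ and $R^*$ swapped via $R^{**}=R$).

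I do not anticipate a real obstacle; the only thing to be careful about is the use of invariance in the non-associative setting. Since the form is symmetric, the identity $\langle xy,z\rangle=\langle x,yz\rangle$ suffices to derive both $\langle xy,z\rangle=\langle y,zx\rangle$ and $\langle xy,z\rangle=\langle zx,y\rangle$, and all three forms are needed to move the arguments of $R$ and $R^*$ into the position required by each identity. Beyond this bookkeeping, the argument is a short and symmetric three-step rotation.
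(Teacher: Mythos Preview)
Your proposal is correct and follows essentially the same approach as the paper: pair each identity with an arbitrary test element $z$, shuffle factors via invariance, move $R$ across the form via $R^*$, and conclude by nondegeneracy. The only cosmetic difference is that the paper writes out the pairing identities for both triples explicitly in one display, whereas you organize the first triple as $(\ref{eq:lAve-1})\Leftrightarrow(\ref{eq:lAve-2})$ and $(\ref{eq:lAve-2})\Leftrightarrow(\ref{eq:lAve-3})$ and then obtain the second triple by the substitution $R\mapsto R^*$ (using $R^{**}=R$); this is a valid and slightly more economical packaging of the same computation.
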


\begin{proof}
Note that for every $x,y,z\in A$ we have 
\[
 \begin{gathered}
 \langle R(y)R(x),z\rangle = \langle x, R^*(zR(y))\rangle = \langle y, R^*(R(x)z)\rangle , \\
 \langle R(R(y)x), z\rangle = \langle x, R^*(z)R(y) \rangle = \langle y, R^*(xR^*(z))\rangle . \\
 \langle R(yR(x)),z\rangle = \langle x, R^*(R^*(z)y)\rangle = \langle y, R(x)R^*(z)\rangle , \\
 \langle R(R^*(y)x),z\rangle = \langle x, R^*(z)R^*(y)\rangle = \langle y, R(xR^*(z))\rangle , \\
\end{gathered}
\]
Nondegeneracy of $\langle \cdot,\cdot \rangle$ implies the claim.
\end{proof}

\begin{thm}\label{thm:OP_RB_ave}
 Let $V$ be a double algebra with a double bracket $\dbl \cdot,\cdot \dbr $ determined by an operator $R:\End V\to \End V$ by \eqref{eq:Bracket_via_OP}.
 Then
\begin{enumerate}
\item $V$ is a Lie double algebra if and only if 
\begin{equation}\label{eq:RB_skew}
  R^*=-R,\quad R(x)R(y) = R(R(x)y)+R(xR(y)), \quad x,y\in \End V.
\end{equation}
\item 
 $V$ is an associative double algebra if and only if 
\begin{equation}\label{eq:Ave_dual}
 R(x)R(y) = R(R(x)y),\quad R^*(x)R^*(y) = R^*(R^*(x)y), \quad    x,y\in \End V.
\end{equation}
\item 
 $V$ is a commutative double algebra if and only if 
\begin{equation}\label{eq:Ave_herm}
 R=R^*,\quad R(x)R(y)=R(R(x)y)=R(xR(y)), \quad x,y\in \End V. 
\end{equation}
\end{enumerate}
\end{thm}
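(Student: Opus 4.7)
The plan is to translate every identity on the double bracket into an identity on the tensor $\rho := \sum_i e_i \otimes R(e_i^*) \in \End V \otimes \End V$, viewed as an element of $\End(V \otimes V)$, for which $\dbl a, b\dbr = \rho(a \otimes b)$. My first observation would be that swapping the two tensor factors of $\rho$ corresponds under \eqref{eq:Bracket_via_OP} to replacing $R$ by $R^*$, as is immediate from the alternative expression $\rho = \sum_i R^*(e_i) \otimes e_i^*$. Consequently, $\dbl a, b\dbr + \dbl b, a\dbr^{(12)} = 0$ is equivalent to $R^* = -R$, and $\dbl a, b\dbr = \dbl b, a\dbr^{(12)}$ is equivalent to $R^* = R$, settling the first halves of (1) and (3).

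Next I would rewrite the four auxiliary maps of the excerpt in terms of operators $\rho_{ij}$ (where $\rho$ acts on positions $i, j$ of $V^{\otimes 3}$): a direct check gives $\dbl a, b \otimes c\dbr_L = \rho_{12}$, $\dbl a \otimes b, c\dbr_R = \rho_{23}$, and that both $\dbl a, b \otimes c\dbr_R$ and $\dbl a \otimes b, c\dbr_L$ equal $\rho_{13}$. Substituting these into Definitions~\ref{defn:dLie} and~\ref{defn:dAss}, the Jacobi and the two associativity identities turn into
\[
\rho_{12}\rho_{23} - \rho_{23}\rho_{13} = \rho_{13}\rho_{12}, \quad \rho_{12}\rho_{23} = \rho_{13}\rho_{12}, \quad \rho_{13}\rho_{23} = \rho_{23}\rho_{12},
\]
as equations in $\End(V^{\otimes 3})$.

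The main technical step is to convert each of these tensor equations into an equivalent operator identity on $\End V$. For this I would pair the trilinear tensors slot by slot with arbitrary $y, x, z \in \End V$ via the trace form, and repeatedly apply the reproducing identities $\sum_i \langle e_i, u\rangle e_i^* = u$ and $\sum_i \langle e_i, u\rangle \langle e_i^*, v\rangle = \langle u, v\rangle$ together with invariance $\langle ab, c\rangle = \langle a, bc\rangle = \langle b, ca\rangle$. A short computation yields, for instance,
\[
\langle \rho_{12}\rho_{23}, y \otimes x \otimes z\rangle = \langle y, R^*(R^*(z)x)\rangle, \qquad \langle \rho_{13}\rho_{12}, y \otimes x \otimes z\rangle = \langle y, R^*(z) R^*(x)\rangle,
\]
so by nondegeneracy the equation $\rho_{12}\rho_{23} = \rho_{13}\rho_{12}$ is equivalent to $R^*$ being a left averaging operator. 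The analogous calculation for $\rho_{13}\rho_{23} = \rho_{23}\rho_{12}$ gives $R^*(R(x)z) = R^*(xR^*(z))$, which by Lemma~\ref{lem:op_Equivalence} is equivalent to $R$ being left averaging. Combining these two conditions yields \eqref{eq:Ave_dual} and proves (2); specializing further to $R = R^*$ and reinvoking Lemma~\ref{lem:op_Equivalence} delivers \eqref{eq:Ave_herm} and proves (3). For the Jacobi equation the same procedure produces
\[
R^*(R^*(z)x) - R^*(zR(x)) = R^*(z) R^*(x);
\]
substituting $R^* = -R$ and rearranging yields the skew Rota---Baxter relation of \eqref{eq:RB_skew}, completing (1).

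The main obstacle is the bookkeeping of these pairing calculations: each tensor equation has to be tested with arbitrary triples $(y, x, z)$ and the three invariance moves of the trace form applied in the correct order to land on a clean averaging or Rota---Baxter identity; moreover, one must verify that the operator equations obtained this way are genuinely equivalent to the tensor equations (not merely implied by them), which rests on the nondegeneracy of the trace form and the arbitrariness of $y, x, z \in \End V$.
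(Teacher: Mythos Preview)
Your proposal is correct and follows essentially the same route as the paper: both arguments pair the triple-bracket identities against arbitrary elements of $\End V$ via the trace form to extract operator identities, and both invoke Lemma~\ref{lem:op_Equivalence} to pass between the equivalent averaging relations. The only cosmetic differences are that you use the $r$-matrix notation $\rho_{ij}$ and pair all three tensor slots (landing on $R^*$-identities), while the paper defines the auxiliary operators $F_{12},F_{23},G_{12},G_{23}$ directly and applies $\langle x,\cdot\rangle\otimes\langle y,\cdot\rangle\otimes\idd$ (landing on $R$-identities); by nondegeneracy and Lemma~\ref{lem:op_Equivalence} these are equivalent.
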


\begin{proof}
Equation \eqref{eq:Bracket_via_OP} immediately implies the identity
$\dbl a,b\dbr =\pm \dbl b,a\dbr ^{(12)}$ to be equivalent to $R=\pm R^*$.

Suppose $F_{12}\in (\End V)^{\otimes 3}\simeq \End (V^{\otimes 3})$ is given 
by 
\[
 F_{12}(a\otimes b\otimes c) = \dbl a , \dbl b,c\dbr \dbr _L
 =\sum\limits_{i,j=1}^N e_j(a)\otimes R(e_j^*)e_i(b) \otimes R(e_i^*)(c),  
 \quad a,b,c\in V.
\]
For every $x,y\in \End V$, compute 
\begin{multline}\label{eq:F12}
 (\langle x,\cdot \rangle \otimes \langle y,\cdot \rangle \otimes \idd)F_{12}
 = \sum\limits_{i,j=1}^N \langle x,e_j\rangle \langle y,R(e_j^*)e_i \rangle R(e_i^*) \\
 =\sum\limits_{i=1}^N\left\langle y, \sum\limits_{j=1}^N\langle x,e_j\rangle R(e^*_j)e_i\right\rangle R(e_i^*)
 =\sum\limits_{i=1}^N \langle y,R(x)e_i\rangle R(e_i^*) \\
 =\sum\limits_{i=1}^N \langle yR(x), e_i\rangle R(e_i^*)
 =R(yR(x)).
\end{multline}

Similarly, if 
\[
 F_{23}(a\otimes b\otimes c) = \dbl b, \dbl a,c\dbr \dbr _R^{(12)} = 
 \sum\limits_{i,j=1}^N e_j(a)\otimes e_i(b) \otimes R(e_i^*)R(e_j^*)(c)
\]
then
for every $x,y\in \End V$ we have 
\begin{equation}\label{eq:F23}
(\langle x,\cdot \rangle \otimes \langle y,\cdot \rangle \otimes \idd)F_{23} = R(y)R(x).
\end{equation}

On the other hand, if 
\[
\begin{gathered}
G_{12}(a\otimes b\otimes c) = \dbl \dbl a,b\dbr ,c \dbr _L =  \sum\limits_{i,j=1}^N e_ie_j(a)\otimes R(e_j^*)(b) \otimes R(e_i^*)(c),\\
G_{23}(a\otimes b\otimes c) = \dbl \dbl b,a\dbr ,c\dbr _R^{(12)} =  \sum\limits_{i,j=1}^N e_iR(e_j^*)(a)\otimes e_j(b)\otimes   R(e_i^*)(c)
\end{gathered}
\]
then
\begin{gather}
(\langle x,\cdot \rangle \otimes \langle y,\cdot \rangle \otimes \idd)G_{12} = R(R^*(y)x), \label{eq:G12} \\ 
(\langle x,\cdot \rangle \otimes \langle y,\cdot \rangle \otimes \idd)G_{23} = R(R(y)x). \label{eq:G23} 
\end{gather}

The first statement now follows from \eqref{eq:F12}, \eqref{eq:F23}, \eqref{eq:G12}. 
Relations \eqref{eq:F12}, \eqref{eq:G12} and  \eqref{eq:F23}, \eqref{eq:G23} imply the associativity of $V$ 
is equivalent to the following pair of identities:
\[
 R(yR(x))=R(R^*(y)x), \quad R(y)R(x) = R(R(y)x), \quad x,y\in \End V.
\]
To complete the proof it is enough to apply Lemma \ref{lem:op_Equivalence} to $A=\End V$.
\end{proof}

The second relation in \eqref{eq:RB_skew} is known as {\em Rota---Baxter equation}. 
Linear transformation of an associative algebra satisfying this equation is called a {\em Rota---Baxter operator}.
Associative algebras with Rota---Baxter operator (Rota---Baxter algebras) have a well-developed theory, see \cite{Guo_rbabook}.
Skew-symmetric Rota---Baxter operators are in one-to-one correspondence with constant 
solutions of the classical Yang---Baxter equation, see \cite{Semenov_83}.
The second relation in \eqref{eq:Ave_herm} is known as {\em averaging equation}. Algebras with 
such operators (averaging algebras)
are of substantial interest in functional analysis, 
they have also been studied from combinatorial point of view \cite{PeiGuo2015, JGuopRoz2015}. 

A linear map $R:A\to A$ on an algebra $A$ satisfying the first relation in \eqref{eq:Ave_dual}
is said to be a {\em left averaging} operator on~$A$.

\begin{cor}\label{cor:AveRB_op}
 Let $A$ be an algebra (not necessarily associative) with a symmetric bilinear invariant nondegenerate form. 
Suppose $T$ is a left averaging operator on $A$ such that its conjugate $T^*$ is also left averaging.
Then $R=T-T^*$ is a skew-symmetric Rota---Baxter operator on $A$.
\end{cor}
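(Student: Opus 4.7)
The statement has two parts: showing that $R = T - T^*$ is skew-symmetric, and showing it satisfies the Rota--Baxter identity. The first is immediate: $R^* = T^* - T^{**} = T^* - T = -R$, using that conjugation relative to a nondegenerate form is an involution.

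For the Rota--Baxter identity, the plan is to unpack what the hypotheses really give us. The assumption that both $T$ and $T^*$ are left averaging means each satisfies \eqref{eq:lAve-1} in Lemma \ref{lem:op_Equivalence}. Applying the lemma to $T$ yields the full first triple \eqref{eq:lAve-1}--\eqref{eq:lAve-3} for $T$, and applying it to $T^*$ yields the same triple with $T$ and $T^*$ interchanged — which, since $T^{**}=T$, is exactly the second triple \eqref{eq:lAve-1d}--\eqref{eq:lAve-3d} for $T$. So the upshot of the hypothesis is that $T$ satisfies all six identities of Lemma \ref{lem:op_Equivalence} simultaneously.

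With this arsenal in hand, the remaining step is a direct expansion. Write
\[
 R(x)R(y) = T(x)T(y) - T(x)T^*(y) - T^*(x)T(y) + T^*(x)T^*(y),
\]
and rewrite each summand using the appropriate identity: \eqref{eq:lAve-1} gives $T(x)T(y)=T(T(x)y)$, \eqref{eq:lAve-1d} gives $T^*(x)T^*(y)=T^*(T^*(x)y)$, and the two ``mixed'' identities \eqref{eq:lAve-2}, \eqref{eq:lAve-2d} rewrite the cross terms as $T^*(xT(y))$ and $T(xT^*(y))$. On the other side,
\[
 R(R(x)y)+R(xR(y))
\]
expands into eight terms of the form $T^{\pm}(T^{\pm}(x)y)$ and $T^{\pm}(xT^{\pm}(y))$. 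Comparing with the four-term expression above, the discrepancy reduces exactly to the two equalities
\[
 T(xT(y))=T(T^*(x)y), \qquad T^*(xT^*(y))=T^*(T(x)y),
\]
which are precisely \eqref{eq:lAve-3d} and \eqref{eq:lAve-3} for $T$, already available. Everything cancels and the Rota--Baxter relation drops out.

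The proof is essentially bookkeeping; there is no real obstacle, since Lemma \ref{lem:op_Equivalence} has done the conceptual work of making all six mixed relations simultaneously available from the two hypotheses. The only thing to be careful about is invoking the lemma twice — once for $T$ and once for $T^*$ — to harvest both triples, and then matching up the four ``mixed'' terms in the expansion correctly.
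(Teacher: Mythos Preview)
Your proof is correct. The paper does not write out a proof for this corollary, but since it is placed immediately after Lemma~\ref{lem:op_Equivalence} and Theorem~\ref{thm:OP_RB_ave}, the intended argument is exactly the one you give: invoke Lemma~\ref{lem:op_Equivalence} twice to obtain all six identities \eqref{eq:lAve-1}--\eqref{eq:lAve-3d} for $T$, then verify the Rota--Baxter relation by direct expansion.
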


\begin{exmp}
A simple finite-dimensional Lie algebra $A$ over a field of zero characteristic 
satisfies the condition of Corollary \ref{cor:AveRB_op}. 
For a Lie algebra, left averaging operator is obviously an averaging one.
For example, for $A=sl(2,\mathbb C)$ all averaging operators (described in \cite{Kol2015_JMP}) 
are symmetric, so the only Rota---Baxter operator of the form $T-T^*$ is zero.
\end{exmp}

\section{Ideals in Lie double algebras}

Let us first state a necessary condition of simplicity of a double algebra.

\begin{prop}\label{prop:irreducible}
 Let $V$ be a simple finite-dimensional double algebra with a double product $\dbl \cdot ,\cdot \dbr$
corresponding to an operator $R:\End V\to \End V$.
Then $V$ has no nonzero proper invariant subspaces relative to all operators from $R(\End V)$ and $R^*(\End V)$.
\end{prop}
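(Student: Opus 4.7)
The plan is to show that any subspace $U\subseteq V$ simultaneously invariant under every operator in $R(\End V)$ and every operator in $R^*(\End V)$ is automatically a double ideal of $V$. Once this is established, the simplicity of $V$ forces $U=0$ or $U=V$.

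The key observation is that formula \eqref{eq:Bracket_via_OP} provides two different expressions for the same double bracket, and each of them controls one ``side'' of the ideal condition. Concretely, for $u\in U$ and $a\in V$ I would first compute
\[
\dbl a,u\dbr = \sum_{i=1}^N e_i(a)\otimes R(e_i^*)(u).
\]
Since $R(e_i^*)\in R(\End V)$ and $U$ is invariant under all such operators, each $R(e_i^*)(u)$ lies in $U$, so $\dbl a,u\dbr \in V\otimes U$. Then I would use the second expression in \eqref{eq:Bracket_via_OP} to compute
\[
\dbl u,a\dbr = \sum_{i=1}^N R^*(e_i)(u)\otimes e_i^*(a),
\]
and invariance of $U$ under $R^*(\End V)$ gives $\dbl u,a\dbr \in U\otimes V$. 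Combining these two inclusions yields
\[
\dbl V,U\dbr + \dbl U,V\dbr \subseteq U\otimes V + V\otimes U,
\]
which is precisely the definition of $U$ being an ideal. By the definition of simplicity (and since $\dbl V,V\dbr \neq 0$ is part of the simplicity assumption), $U$ must be either $0$ or $V$.

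There is no real obstacle here: the entire argument is a direct two-line computation, and the statement is essentially a bookkeeping consequence of the operator realization of the bracket. The only modest subtlety is recognizing that one must use both presentations of $\dbl\cdot,\cdot\dbr$ from \eqref{eq:Bracket_via_OP}; using only one of them would give only one of the two required inclusions. This is why the hypothesis involves both $R(\End V)$ and $R^*(\End V)$ rather than only one of them.
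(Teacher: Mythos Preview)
Your proposal is correct and follows exactly the approach of the paper: the authors simply state that relation \eqref{eq:Bracket_via_OP} immediately implies every $R(\End V)$- and $R^*(\End V)$-invariant subspace of $V$ is an ideal. Your argument is a faithful unpacking of that one-line observation, using the two dual expressions in \eqref{eq:Bracket_via_OP} precisely as intended.
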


\begin{proof}
Relation \eqref{eq:Bracket_via_OP} immediately implies every   
$R(\End V)$- and $R^*(\End V)$-invariant subspace of $V$ to be an ideal.
\end{proof}

Recall that a subalgebra $B$ of $\End V$ is called irreducible if $V$ is an irreducible $B$-module.
The Jacobson Density Theorem (see, e.g., \cite{Jac}) implies an irreducible subalgebra $B$ to be isomorphic
to $\End_{\mathcal D^{op}} V$, where $\mathcal D$ is the centralizer of $B$ in $A$ (division algebra). 
In particular, if the base field $\Bbbk $ is algebraically closed then $B=\End V$ (Burnside Theorem). 
For an arbitrary field $\Bbbk $, an irreducible subalgebra $B$ has to contain the identity of $\End V$.

\begin{thm}\label{thm:NoLieSimple}
Let $L$ be a finite-dimensional Lie double algebra, $\dim L>1$. Then 
$L$ contains a nonzero proper ideal.
\end{thm}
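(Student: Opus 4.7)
The plan is to translate the problem into one about operators via Theorem~\ref{thm:OP_RB_ave}(1), and then to exploit the Jacobson density theorem together with the Rota---Baxter identity to rule out simplicity. So suppose for contradiction that $L$ is a simple Lie double algebra with $n:=\dim L\geq 2$, and let $R:\End L\to\End L$ be the corresponding skew-symmetric Rota---Baxter operator, i.e.\ $R^*=-R$ and $R(x)R(y)=R(R(x)y+xR(y))$ for all $x,y\in\End L$. By Proposition~\ref{prop:irreducible}, the skew-symmetry $R^*=-R$ implies that $L$ has no proper nonzero subspace invariant under $\mathrm{Im}\,R$; since $\idd$ preserves every subspace, the unital subalgebra $\tilde B:=\Bbbk\idd+\mathrm{Im}\,R\subseteq\End L$ also acts irreducibly on $L$. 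The Jacobson density theorem then identifies $\tilde B$ with $\End_{\mathcal D} L$ for the division $\Bbbk$-algebra $\mathcal D:=\End_{\tilde B} L$, so $\tilde B$ is a simple Artinian algebra.

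The heart of the argument is the observation that the Rota---Baxter identity promotes $\mathrm{Im}\,R$ from a subspace to a two-sided ideal of $\tilde B$. Indeed, $R(x)R(y)=R(R(x)y+xR(y))\in\mathrm{Im}\,R$ shows that $\mathrm{Im}\,R$ is closed under the product in $\End L$, hence
\[
\tilde B\cdot\mathrm{Im}\,R=(\Bbbk\idd+\mathrm{Im}\,R)\cdot\mathrm{Im}\,R\subseteq\mathrm{Im}\,R,
\]
and symmetrically $\mathrm{Im}\,R\cdot\tilde B\subseteq\mathrm{Im}\,R$. Simplicity of $\tilde B$ then forces the dichotomy $\mathrm{Im}\,R=0$ or $\mathrm{Im}\,R=\tilde B$.

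To close, I would eliminate both possibilities. The case $\mathrm{Im}\,R=0$ gives $R=0$, so the double bracket vanishes and $L$ cannot be simple. In the case $\mathrm{Im}\,R=\tilde B$, we have $\idd\in\mathrm{Im}\,R$, so $\idd=R(a)$ for some $a\in\End L$; taking $x=y=a$ in the Rota---Baxter identity yields
\[
\idd=R(a)R(a)=R\bigl(R(a)\,a+a\,R(a)\bigr)=R(2a)=2\,R(a)=2\,\idd,
\]
which forces $\idd=0$ in $\End L$ and thus $L=0$, contradicting $\dim L\geq 2$. I expect the main obstacle to be the recognition that the Rota---Baxter identity, together with $\idd\in\tilde B$, makes $\mathrm{Im}\,R$ an ideal rather than merely a subalgebra of $\tilde B$; once this is in hand, Jacobson density and the one-line computation above finish the proof over an arbitrary base field.
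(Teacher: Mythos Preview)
Your proof is correct and follows essentially the same route as the paper: both reduce via Theorem~\ref{thm:OP_RB_ave}(1) to the skew-symmetric Rota---Baxter operator $R$, invoke Proposition~\ref{prop:irreducible} together with the Jacobson density theorem to force $\idd\in R(\End L)$, and then derive the contradiction $\idd=2\,\idd$ from the Rota---Baxter identity applied to a preimage of~$\idd$. The only cosmetic difference is that you pass through the unital hull $\tilde B=\Bbbk\,\idd+\mathrm{Im}\,R$ and argue that $\mathrm{Im}\,R$ is a two-sided ideal of the simple algebra $\tilde B$, whereas the paper uses directly the fact (recorded just before the theorem) that an irreducible subalgebra of $\End L$ must already contain the identity.
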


\begin{proof}
 By Theorem \ref{thm:OP_RB_ave} the double bracket on $L$ is given by \eqref{eq:Bracket_via_OP} for 
 an appropriate skew-symmetric Rota---Baxter operator $R$ on the algebra $A=\End L$. 
 The Rota---Baxter relation 
 \[
  R(x)R(y) = R(R(x)y)+R(xR(y))
 \]
implies $R(A)$ to be a subalgebra of $A$. Note that $R(A)$ does not contain the identity $1\in A$. 
Indeed, if $R(x)=1$ for some $x\in A$ then $1\cdot 1 = R(x)+R(x)=1+1$ which is impossible.
This contradicts to Proposition \ref{prop:irreducible}.
\end{proof}

The only Lie double algebra $L$ without nonzero proper ideals is one-dimensional. In this case, $\dblb L,L\dbrb =0$, 
so simple finite-dimensional Lie double algebras do not exist. 

If $L$ is a finite-dimensional Lie double algebra then every ideal of $L$ may be embedded into a
maximal one which has to be of codimension one. On the other hand, every ideal of $L$ 
contains a minimal one. This observation causes natural question on the possible dimension of such minimal ideals. 

\begin{exmp}\label{exmp:L_2,n}
 Let $L(2,n)$ denote the Lie double algebra $L_2\otimes V_c\otimes V_c^{op}$, $V=\Bbbk^n$.
If $I$ is a nonzero ideal in $L(2,n)$ then $\dim I\ge n$.
\end{exmp}

\section{Ideals in double associative algebras}

Let $V$ be a linear space over a field $\Bbbk $ with double bracket $\dbl u,v \dbr =\alpha u\otimes v$, $\alpha \in \Bbbk $.
Then $V$ is a commutative double algebra and every subspace of $V$ is an ideal.
Thus this double algebra is simple if and only if $\dim V=1$, $\alpha \ne 0$. We are going to show that there are 
no other simple finite-dimensional associative double algebras. 

Throughout the rest of the section $V$ is a finite-dimensional associative double algebra of dimension $n$ 
with double bracket $\dbl \cdot ,\cdot \dbr $ 
given by \eqref{eq:Bracket_via_OP} for an appropriate operator $R$ on $A=\End V$ (equipped with the trace form)
satisfying \eqref{eq:lAve-1}--\eqref{eq:lAve-3d}.

\begin{lem}\label{lem:GeneralProp_of_R,R^*}
\begin{enumerate}
 \item 
    The space $B=R(A)+R^*(A)$ is a subalgebra of $A$.
 \item
    $R(A)$ and $R^*(A)$ are right ideals of $B$.
 \item
   $\Ker R$ and $\Ker R^*$ are left $B$-submodules of $A$.
 \item 
   $\Ker R \cdot R^*(A)\subseteq \Ker R\cap \Ker R^*$.
 \item 
   $\Ker R^* \cdot R(A) \subseteq \Ker R\cap \Ker R^*$.
\end{enumerate}
\end{lem}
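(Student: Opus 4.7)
The plan is to derive each assertion mechanically from the six averaging identities \eqref{eq:lAve-1}--\eqref{eq:lAve-3d}, all of which are available because $V$ is an associative double algebra: by Theorem \ref{thm:OP_RB_ave} the operator $R$ on $A=\End V$ satisfies \eqref{eq:Ave_dual}, and Lemma \ref{lem:op_Equivalence} then supplies the full list for both $R$ and $R^*$. Each of the five claims reduces to plugging in the correct identity.

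For (1) and (2), I would compute the four bilinear products between elements of $R(A)$ and $R^*(A)$: identity \eqref{eq:lAve-1} gives $R(x)R(y)=R(R(x)y)\in R(A)$, identity \eqref{eq:lAve-2d} gives $R(x)R^*(y)=R(xR^*(y))\in R(A)$, identity \eqref{eq:lAve-2} gives $R^*(x)R(y)=R^*(xR(y))\in R^*(A)$, and identity \eqref{eq:lAve-1d} gives $R^*(x)R^*(y)=R^*(R^*(x)y)\in R^*(A)$. These four inclusions together yield $B\cdot B\subseteq R(A)+R^*(A)=B$, proving (1). Moreover, the same computations show that every product $R(x)\cdot b$ with $b\in B$ already lies in $R(A)$, and every product $R^*(x)\cdot b$ lies in $R^*(A)$, which is exactly (2).

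For (3), take $x\in \Ker R$ and $b\in B$. If $b=R(z)$ then \eqref{eq:lAve-1} gives $R(bx)=R(z)R(x)=0$; if $b=R^*(z)$ then \eqref{eq:lAve-3d} gives $R(bx)=R(R^*(z)x)=R(zR(x))=0$. A symmetric argument for $\Ker R^*$ uses \eqref{eq:lAve-1d} and \eqref{eq:lAve-3}. For (4), if $x\in \Ker R$ then \eqref{eq:lAve-2d} gives $R(xR^*(y))=R(x)R^*(y)=0$ and \eqref{eq:lAve-3} gives $R^*(xR^*(y))=R^*(R(x)y)=0$, so $xR^*(y)\in\Ker R\cap\Ker R^*$; statement (5) is proved identically after interchanging the roles of $R$ and $R^*$, using \eqref{eq:lAve-2} and \eqref{eq:lAve-3d}. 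There is really no obstacle beyond the bookkeeping of matching each claim with the correct pair of identities---the substantive content has already been packaged into Lemma \ref{lem:op_Equivalence}, and everything here is a one-line substitution.
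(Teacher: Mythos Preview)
Your proof is correct and is exactly the approach the paper intends: its proof reads in full ``immediately follow from \eqref{eq:lAve-1}--\eqref{eq:lAve-3d},'' and you have simply written out the substitutions explicitly. Each identity you invoke is the right one for the corresponding inclusion.
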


\begin{proof}
immediately follow from \eqref{eq:lAve-1}--\eqref{eq:lAve-3d}.
\end{proof}

\begin{lem}\label{lem:NondegCase}
If $\Ker R=0$ then $R=R^*=\alpha \idd_A$ for $\alpha \ne 0$.
\end{lem}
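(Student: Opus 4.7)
My plan exploits the left averaging identity \eqref{eq:lAve-1} together with the hypothesis that $R$ is a bijection on the finite-dimensional algebra $A=\End V$.

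First, since $\Ker R=0$ and $A$ is finite-dimensional, $R$ is invertible. Writing $x=R^{-1}(u)$ in \eqref{eq:lAve-1} yields
\[
u R(y) = R(u y), \quad u,y\in A,
\]
so $R$ is a homomorphism of left $A$-modules from ${}_AA$ to ${}_AA$. Setting $u=1$ shows that $R$ is just right multiplication by the element $c:=R(1)$, i.e.\ $R(x)=xc$ for all $x\in A$. Using invariance of the trace form, $\langle x, yc\rangle = \langle cx,y\rangle$, one immediately gets $R^*(x)=cx$.

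Next I would argue that $c$ is invertible in $A$. If $c$, viewed as a linear map $V\to V$, were not surjective, one could pick a nonzero linear functional $f\in V^*$ vanishing on $cV$ and a nonzero $v\in V$; then $x=v\otimes f\in\End V$ is nonzero but $xc=0$, contradicting $\Ker R=0$.

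Finally, I would bring in identity \eqref{eq:lAve-3d}, $R(R^*(x)y)=R(x R(y))$, which becomes
\[
(cx\,y)c=(x\,yc)c,\qquad\text{i.e.}\qquad czc = zc^{2}\quad\text{for all}\ z\in A
\]
(letting $z=xy$ range over $A$, which is possible because $A$ has unit). Rewriting this as $[c,z]c=0$ and using invertibility of $c$ gives $[c,z]=0$ for every $z\in A$, so $c$ lies in the center of $\End V$. Thus $c=\alpha\,\idd_V$ for some $\alpha\in\Bbbk$, and $\alpha\ne0$ because $c$ is invertible. Substituting back, $R(x)=\alpha x=R^*(x)$ for all $x\in A$, which is exactly the claim.

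The only mildly tricky step is the invertibility of $c$: the algebraic identities alone guarantee that $c$ is not a left zero-divisor in $\End V$, and one must pass from that to actual invertibility via the rank-one construction $v\otimes f$ above. After that, the conclusion $c\in Z(\End V)=\Bbbk\cdot\idd_V$ is forced by a single commutator relation.
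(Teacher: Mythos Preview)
Your proof is correct. The paper's argument reaches the same conclusion by a slightly shorter path after the first step: having obtained $R(xy)=xR(y)$ from \eqref{eq:lAve-1} and surjectivity of $R$, the paper notes that $\dim\Ker R=\dim\Ker R^*$ forces $R^*$ to be surjective too, and then substitutes into \eqref{eq:lAve-2d} to get $R(x)y=R(xy)$ for all $x,y\in A$. Thus $R$ is at once a left and a right $A$-module endomorphism of $A$, so $c=R(1)$ is central immediately, with no separate invertibility argument required. Your route trades that observation for an explicit computation of $R^*(x)=cx$ from the trace form, an invertibility argument for $c$, and a use of \eqref{eq:lAve-3d} to force $[c,z]=0$; this is a perfectly valid alternative, just one identity and one auxiliary step longer.
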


\begin{proof}
 Since $\dim \Ker R=\dim\Ker R^*$, we have $R^*(A)=A$.
 Relations \eqref{eq:lAve-1} and \eqref{eq:lAve-2d} imply $R(xy)=xR(y)=R(x)y$ for all $x,y\in A$. 
 Hence, $R=\alpha \idd_A$ for a nonzero scalar~$\alpha$.
\end{proof}

\begin{prop}\label{prop:R+R^*=A}
 If $R(A)+R^*(A)=A$ then $\Ker R=0$.
\end{prop}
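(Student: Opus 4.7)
The plan is to prove, under the hypothesis $R(A)+R^*(A)=A$, that $R$ is a nonzero scalar multiple of $\idd_A$; then $\Ker R=0$ follows at once.

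By Lemma~\ref{lem:GeneralProp_of_R,R^*}(2), $R(A)$ is a right ideal of $B=R(A)+R^*(A)$, so the hypothesis $B=A=\End V$ promotes it to a right ideal of $\End V$ itself. Every right ideal of $\End V$ has the standard form $R(A)=\{f\in\End V:f(V)\subseteq T\}$ for a uniquely determined subspace $T\subseteq V$. The case $R=0$ is trivial: then the trace-form adjoint forces $R^*=0$ and hence $B=0\neq A$. So I assume $R\neq 0$, whence $T\neq 0$.

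The central step is to identify $R$ with right multiplication by $e:=R(1)$. Setting $y=1$ in \eqref{eq:lAve-1} gives $R(r)=re$ for every $r\in R(A)$. Since $R(A)$ is a right ideal of $A$, the product $R(x)y$ lies in $R(A)$ for every $y\in A$, hence $R(R(x)y)=R(x)y\cdot e$. Combined with \eqref{eq:lAve-1}, this yields $R(x)\bigl(R(y)-ye\bigr)=0$ for all $x,y$, i.e., $R(A)\cdot\bigl(R(y)-ye\bigr)=0$ in $A$. The condition $T\neq 0$ ensures that $R(A)\cdot z=0$ forces $z=0$: given $z\neq 0$ with $z(v_0)\neq 0$, pick $\varphi\in V^*$ with $\varphi(z(v_0))\neq 0$ and a nonzero $t\in T$; the rank-one operator $x\mapsto\varphi(x)t$ lies in $R(A)$ and sends $z(v_0)$ to $\varphi(z(v_0))t\neq 0$. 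Consequently $R(y)=ye$ for every $y\in A$.

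The same argument applied to $R^*$---which is left averaging by \eqref{eq:lAve-1d} and whose image is a right ideal of $A$ by the same lemma---yields $R^*(y)=ye'$ with $e':=R^*(1)$. On the other hand, the adjoint of right multiplication by $e$ under the trace form is left multiplication by $e$, so $R^*(y)=ey$. Thus $ey=ye'$ for every $y$; taking $y=1$ gives $e=e'$, after which $ey=ye$ for all $y$, so $e=\alpha\,\idd_V$ for some $\alpha\in\Bbbk$. Since $R\neq 0$ we have $\alpha\neq 0$, $R=\alpha\,\idd_A$ is invertible, and $\Ker R=0$. The only nontrivial point is the derivation of $R(y)=ye$ from the averaging identities and the explicit shape of $R(A)$; the remainder is a routine centrality argument.
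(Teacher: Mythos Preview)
Your proof is correct and takes a genuinely different, more streamlined route than the paper's.

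The paper argues by contradiction: assuming $R(A)+R^*(A)=A$ and $\Ker R\neq 0$, it first derives the decomposition formula $R(P)=P_1R(1)+R(1)P_2$ for $P=P_1+P_2$ with $P_1\in R(A)$, $P_2\in R^*(A)$, and then splits into two cases according to whether the right ideal $R(A)\cap R^*(A)$ is zero or not. The nonzero case is handled by showing $R(1)$ commutes with enough matrix units to be scalar; the zero case (so $A=R(A)\dot+R^*(A)$, $n=2r$) requires an explicit matrix description of $R(A)$, $R^*(A)$, $\Ker R$, $\Ker R^*$ and a direct contradiction via Lemma~\ref{lem:GeneralProp_of_R,R^*}.

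You bypass this case split entirely. Your key observation is that once $R(A)$ is a nonzero right ideal of $\End V$, the identity $R(x)(R(y)-ye)=0$ combined with the fact that such a right ideal has trivial right annihilator immediately forces $R$ to be right multiplication by $e=R(1)$. The parallel argument for $R^*$ and the trace-form computation of the adjoint then pin down $e$ as central, hence scalar. This is shorter and more conceptual: it isolates exactly what the averaging identity says when the image is already a right ideal of the full matrix algebra, and it avoids any coordinate computations. The paper's approach, on the other hand, makes the structure of the degenerate case $R(A)\cap R^*(A)=0$ completely explicit, which may be of independent interest but is not needed for the bare statement.
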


\begin{proof}
Assume $A=R(A)+R^*(A)$, $\Ker R\ne 0$.  
Then for every $P\in A$ there exists a decomposition $P=P_1+P_2$, $P_1\in R(A)$, $P_2\in R^*(A)$. 
Relations \eqref{eq:lAve-1} and \eqref{eq:lAve-2d} imply
\begin{equation}\label{eq:R_decompos}
  R(P) = R(P_1)+R(P_2) = R(P_1\cdot 1) + R(1\cdot P_2) = P_1R(1) + R(1)P_2.
\end{equation}

Then $R(A)$, $R^*(A)$, and $R(A)\cap R^*(A)$ are proper right ideals in $A$ and there exists a basis in $V$ such that 
in the corresponding matrix presentation 
 \[
  R(A)\cap R^*(A)=\left\{ \begin{pmatrix} x & y \\ 0 & 0 \end{pmatrix} \mid x\in M_{s}(\Bbbk ), y\in M_{s,n-s}(\Bbbk ) \right \}
 \]
for some $0\le s< n$. 

If $s>0$ then for every $u\in R(A)\cap R^*(A)$ 
\[
 R(u)=R(u\cdot 1) = uR(1), \quad R(u)=R(1\cdot u) = R(1)u
\]
by \eqref{eq:lAve-1}, \eqref{eq:lAve-2d}. 
Hence, 
$R(1)$ commutes with every matrix unit $e_{1i}$, $i=1,\dots, n$, 
so $R(1)=\alpha 1$ for some $\alpha \in \Bbbk $. Note that $\alpha \ne 0$: otherwise, 
$R(A)=0=R^*(A)$ by \eqref{eq:R_decompos}.
This implies $R(A)=A$ in contradiction with our assumption.

Therefore, $s=0$ and $R(A)\cap R^*(A)=0$, so $A=R(A)\dot + R^*(A)$, $n=2r$.
As a proper right ideal, 
\[
 R(A)=\left\{ \begin{pmatrix} x & y \\ 0 & 0 \end{pmatrix} \mid x\in M_{r}(\Bbbk ), y\in M_{r,n-r}(\Bbbk ) \right \}
\]
in an appropriate matrix presentation of $\End V$, $0<r<n$
Since $\Ker R^*=R(A)^\perp $ (relative to the trace form), we have 
\[
 \Ker R^* = \left\{ \begin{pmatrix} 0 & y \\ 0 & z \end{pmatrix} \mid y\in M_{r,n-r}(\Bbbk ), z\in M_{n-r}(\Bbbk ) \right \}
\]
As a complement of $R(A)$, the right ideal $R^*(A)$ is of the form
\[
\left\{ \begin{pmatrix} \psi(w) \\ w \end{pmatrix} \mid w\in M_{n-r,n}(\Bbbk ) \right \} 
\]
for a uniquely defined linear map $\psi: M_{n-r,n}(\Bbbk ) \to M_{r,n}(\Bbbk )$. Recall that $n-r=r$.
Obviously, 
\[
 \begin{pmatrix} \psi(w) \\ w \end{pmatrix} P = \begin{pmatrix} \psi(w)P \\ wP \end{pmatrix} = \begin{pmatrix} \psi(wP) \\ wP \end{pmatrix}
\]
for all $P\in M_n(\Bbbk )$, $w\in M_{n-r,n}(\Bbbk )$. Therefore, $\psi(w) = pw$ for some fixed $p\in M_r(\Bbbk )$, i.e.,
\[
R^*(A) =\left\{ \begin{pmatrix} px & py \\ x & y \end{pmatrix} \mid x\in M_r(\Bbbk), y\in M_{r,n-r}(\Bbbk ) \right \}.
\]
Finally, $\Ker R = R^*(A)^\perp $ (relative to the trace form), so 
\[
 \Ker R = \left\{ \begin{pmatrix} y &-yp \\ z& -zp  \end{pmatrix} \mid y\in M_{r,n-r}(\Bbbk ), z\in M_{n-r}(\Bbbk ) \right \}
\]

Suppose 
$R(1) = \begin{pmatrix} a & b \\ 0 & 0\end{pmatrix}$ for some $a,b\in M_r(\Bbbk )$.
Then by \eqref{eq:R_decompos} 
\[
 R \begin{pmatrix}
           0 & y \\ 0 & 0
          \end{pmatrix}
= \begin{pmatrix}
  0 & y \\ 0 & 0
  \end{pmatrix}
  \begin{pmatrix}
   a & b \\ 0 & 0
  \end{pmatrix} 
=0.
\]
Therefore, 
$\begin{pmatrix} 0 & y \\ 0 & 0 \end{pmatrix} \in \Ker R$ for all $y\in M_r(\Bbbk )$
in contradiction with Lemma~\ref{lem:GeneralProp_of_R,R^*}. 
\end{proof}

\begin{cor}
 Over an algebraically closed field $\Bbbk $, the only simple finite-dimensional associative double algebra 
 is a 1-dimensional space equipped with double product $\dbl u,v\dbr = \alpha u\otimes v$, $\alpha \in \Bbbk ^*$.
\end{cor}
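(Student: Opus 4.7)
The plan is to combine the general structure theorems already proved with Burnside's theorem to pin down $R$ completely. First, I would observe that by Lemma \ref{lem:GeneralProp_of_R,R^*}(1) the subspace $B = R(A) + R^*(A) \subseteq A = \End V$ is a subalgebra. Since $V$ is simple, Proposition \ref{prop:irreducible} says $V$ has no proper nonzero $B$-invariant subspaces; and since simplicity forces $\dbl V,V\dbr \ne 0$, we have $B \ne 0$, so $BV = V$ and $V$ is a faithful irreducible $B$-module. Invoking the Burnside theorem (recorded in the excerpt just before Theorem \ref{thm:NoLieSimple}) for an algebraically closed $\Bbbk$ then yields $B = \End V$, i.e.\ $R(A) + R^*(A) = A$.

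From here, Proposition \ref{prop:R+R^*=A} immediately gives $\Ker R = 0$, and Lemma \ref{lem:NondegCase} collapses the situation to $R = R^* = \alpha\,\idd_A$ for some $\alpha \ne 0$. So the only remaining task is to read off what double bracket this scalar operator produces via \eqref{eq:Bracket_via_OP}.

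For the bracket computation, I would pick a basis of matrix units $\{e_i\} = \{E_{pq}\}$ in $\End V$ relative to some basis of $V$, note that the trace form makes the dual basis $\{e_i^*\} = \{E_{qp}\}$, and check by a direct computation that $\sum_{p,q} E_{pq}(a)\otimes E_{qp}(b) = b\otimes a$ for all $a,b\in V$. Thus $R = \alpha\,\idd_A$ corresponds to $\dbl a,b\dbr = \alpha\,(b\otimes a)$. For any subspace $I \subseteq V$ one then gets $\dbl V,I\dbr \subseteq I\otimes V$ and $\dbl I,V\dbr \subseteq V\otimes I$, so every subspace is an ideal; simplicity of $V$ forces $\dim V = 1$. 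In that case $b\otimes a = a\otimes b$, and the bracket acquires the stated form $\dbl u,v\dbr = \alpha\, u\otimes v$ with $\alpha \in \Bbbk^*$.

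The main obstacle is really just the Burnside step: one has to be a bit careful that $B$ may not a priori contain the identity, and that what is needed is non-degeneracy of the $B$-action on $V$. Both issues are handled by $B \ne 0$ together with Proposition \ref{prop:irreducible}, after which the density theorem delivers $B = \End V$. Everything after that is a direct bookkeeping step using the lemmas already proved.
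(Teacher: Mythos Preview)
Your proof is correct and follows essentially the same path as the paper's own argument: use Proposition~\ref{prop:irreducible} and Burnside to get $R(A)+R^*(A)=A$, then apply Proposition~\ref{prop:R+R^*=A} and Lemma~\ref{lem:NondegCase} to conclude $R=\alpha\,\idd_A$, and finally observe that the resulting bracket $\dbl a,b\dbr=\alpha\,b\otimes a$ makes every subspace an ideal. The extra care you take with $B\ne 0$, $BV=V$, and the explicit matrix-unit computation is more detailed than the paper's terse treatment, but the approach is the same.
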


\begin{proof}
 By Proposition \ref{prop:irreducible}, $B=R(A)+R^*(A)$ is an irreducible subalgebra of $A$. 
 The Burnside Theorem implies $B=A$, so by Proposition \ref{prop:R+R^*=A} and Lemma \ref{lem:NondegCase}
 $R=R^*=\alpha \idd_A$, $\alpha \in \Bbbk^*$. Therefore, 
 $\dbl u,v\dbr =\alpha v\otimes u$ for every $u,v\in V$. Such a double algebra is simple if and only if 
 $\dim V=1$ (so $u\otimes v=v\otimes u)$.
\end{proof}

Over an arbitrary field, simple finite-dimensional associative double algebras may exist, but they turn out to be commutative.  

\begin{lem}\label{lem:Ideal_in_Irred}
If $B$ is an irreducible subalgebra of $A$ and $I$ is a proper right ideal of $B$  then 
$IV\ne V$.
\end{lem}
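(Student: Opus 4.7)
The plan is to leverage the semisimplicity of $B$, which follows from the Jacobson Density Theorem applied to an irreducible subalgebra of $\End V$ for finite-dimensional $V$. Once $B$ is known to be simple artinian, every right ideal is generated by an idempotent, and the task reduces to showing that $eV \ne V$ whenever $e \in B$ is an idempotent different from the identity of $\End V$.

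First I would invoke the density theorem (as cited in the paragraph preceding this lemma) to identify $B$ with $\End_{\mathcal D^{op}} V$, so that $B$ is simple artinian and in particular semisimple; the same discussion also supplies $1_{\End V} \in B$. Next I would pick a right $B$-module complement $I'$ of $I$ in $B$ and decompose $1 = e + f$ with $e \in I$ and $f \in I'$; then $e = e^2 + ef$ with $e^2 \in I$ and $ef \in I'$, so uniqueness of this decomposition forces $e = e^2$ and $I = eB$. Since $I$ is a proper right ideal, we obtain $e \ne 1_{\End V}$.

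Finally I would compute $IV = (eB)V = e(BV) = eV$, using $1 \in B$ to conclude $BV = V$. If $eV$ equaled $V$, then for each $v \in V$ we would have $v = ew$ for some $w \in V$, hence $ev = e^2 w = ew = v$; thus $e$ would act as the identity on $V$, forcing $e = 1_{\End V}$ and contradicting $e \ne 1_{\End V}$. The main potential obstacle is the idempotent-generation of right ideals, which hinges on the semisimplicity of $B$; since irreducibility in isolation would not in general yield semisimplicity, it is essential to combine the density theorem with the assumption $\dim V < \infty$ rather than to use only the defining irreducibility property.
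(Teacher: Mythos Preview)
Your argument is correct, but it takes a more structure-theoretic route than the paper's. The paper argues directly from the definition of irreducibility: assuming $IV=V$, pick any nonzero $v\in V$; irreducibility gives $V=Bv$, hence $V=IV=I(Bv)=(IB)v\subseteq Iv$, so $Iv=V$ for every nonzero $v$. Thus $I$ is itself an irreducible subalgebra of $A=\End V$ and, by the density-theorem consequence recalled just before the lemma, must contain $1$; then $I\supseteq 1\cdot B=B$, contradicting properness. This avoids any appeal to semisimplicity or idempotent generation of right ideals, using only the cyclic-vector property and the single fact $1\in(\text{irreducible subalgebra})$. Your approach, by contrast, invests the density theorem more heavily up front to get $B$ simple artinian, then exploits that every right ideal is $eB$ for an idempotent $e$; this is perfectly valid and makes the final step ($eV=V\Rightarrow e=1$) very clean, but it imports more machinery than is strictly necessary for this particular conclusion.
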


\begin{proof}
 Assume $IV=V$. Since $B$ is irreducible, $V=Bv$ for every $0\ne v\in V$. Hence, 
 $V=IV=IBv\subseteq Iv$. Therefore, $I$ itself is an irreducible subalgebra of $A$ 
 and must contain the identity, so $I=B$.
\end{proof}

\begin{thm}\label{thm:R=R^*}
 Suppose $V$ is a simple finite-dimensional associative double algebra. Then $V$ is commutative. 
\end{thm}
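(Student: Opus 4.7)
The plan is to prove $R = R^*$; combined with associativity of $V$, this implies commutativity by Theorem~\ref{thm:OP_RB_ave}(3), since $R = R^*$ together with \eqref{eq:lAve-2d} forces the missing averaging relation $R(x)R(y) = R(xR(y))$.

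The first step is the observation that $W := R(A)V$ is automatically an ideal of $V$. Reading the formula \eqref{eq:Bracket_via_OP} as $\dbl u,v \dbr = \sum_i e_i(u) \otimes R(e_i^*)(v)$, each second tensor factor lies in $R(A)V = W$, so $\dbl V,V \dbr \subseteq V \otimes W$. Consequently $\dbl V,W \dbr + \dbl W,V \dbr \subseteq V \otimes W \subseteq W \otimes V + V \otimes W$, which is the defining condition for an ideal. Symmetrically, using the other form of \eqref{eq:Bracket_via_OP}, $W' := R^*(A)V$ is an ideal as well. Simplicity of $V$ requires $\dbl V,V\dbr \ne 0$, hence $R \ne 0$ and its conjugate $R^* \ne 0$, which rules out $W = 0$ or $W' = 0$; simplicity then forces $W = W' = V$. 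Now Proposition~\ref{prop:irreducible} makes $B := R(A) + R^*(A)$ an irreducible subalgebra of $\End V$ and Lemma~\ref{lem:GeneralProp_of_R,R^*} makes $R(A), R^*(A)$ right ideals of $B$; since $R(A)V = V$, Lemma~\ref{lem:Ideal_in_Irred} forces $R(A) = B$, and likewise $R^*(A) = B$.

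With $R^*(A) = B$, identity \eqref{eq:lAve-1d} promotes $R^*$ to a left $B$-linear map: any $b \in B$ equals $R^*(x)$ for some $x$, so $R^*(by) = R^*(R^*(x)y) = R^*(x)R^*(y) = b R^*(y)$. Applied with $b = R(x) \in R(A) \subseteq B$, this yields $R^*(R(x)y) = R(x)R^*(y)$. Rewriting the left-hand side via \eqref{eq:lAve-3} as $R^*(xR^*(y))$ and the right-hand side via \eqref{eq:lAve-2d} as $R(xR^*(y))$ gives
\[
 R(xR^*(y)) = R^*(xR^*(y)) \qquad \text{for all } x,y \in A.
\]
Because $1 \in B = R^*(A)$, choosing $y_0$ with $R^*(y_0) = 1$ shows every $x \in A$ has the form $xR^*(y_0)$, so $R = R^*$ on all of $A$.

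The main obstacle I was worried about was controlling the interplay between $R(A)$ and $R^*(A)$ in the non-algebraically-closed case, where the irreducible $B$ may be a proper subalgebra of $\End V$; Proposition~\ref{prop:R+R^*=A} (which required $B = \End V$) is then unavailable. The opening ideal-from-$R(A)V$ observation defuses this issue cleanly: simplicity applied to that ideal immediately collapses the situation to $R(A) = R^*(A) = B$, after which the averaging identities close up in a single line.
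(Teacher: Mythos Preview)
Your proof is correct. The first half---showing $R(A)=R^*(A)=B$ via the ideal $R(A)V$ and Lemma~\ref{lem:Ideal_in_Irred}---is exactly the paper's argument (the paper phrases it as ``assume $R(A)$ is a proper right ideal of $B$, then $IV\ne V$ is a proper ideal of $V$''), so there is nothing new there. The second half, however, is genuinely different. The paper finishes by observing that $J=\{R(x)-R^*(x)\mid x\in A\}$ is a two-sided ideal of the simple algebra $B$, then rules out $J=B$ by invoking Corollary~\ref{cor:AveRB_op}: $R-R^*$ is a Rota--Baxter operator, and the image of such an operator cannot contain~$1$. Your route instead chains the identities \eqref{eq:lAve-1d}, \eqref{eq:lAve-3}, \eqref{eq:lAve-2d} to get $R(xR^*(y))=R^*(xR^*(y))$ directly, and then uses only $1\in R^*(A)$ to conclude $R=R^*$. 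This is more elementary: it uses neither the simplicity of $B$ nor the Rota--Baxter connection of Corollary~\ref{cor:AveRB_op}. The paper's argument, on the other hand, makes the structural reason (``$R-R^*$ is Rota--Baxter, hence its image is identity-free'') more visible and ties the associative and Lie cases together.

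One minor remark on your closing paragraph: the paper's own proof of Theorem~\ref{thm:R=R^*} also avoids Proposition~\ref{prop:R+R^*=A}; that proposition is used only for the algebraically-closed corollary, not for the general statement. So the ``main obstacle'' you identify was not actually present in the paper's route either.
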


\begin{proof}
As above, let the double algebra structure on $V$ be defined by an operator $R$ on the algebra $A=\End V$.

Consider $B=R(A)+R^*(A)$. By Proposition \ref{prop:irreducible} $B$ is an irreducible subalgebra of $A$. 
Lemma \ref{lem:GeneralProp_of_R,R^*} implies $R(A)$ and $R^*(A)$ to be right ideals of $B$. 

Assume $I=R(A)$ is a proper right ideal of $B$. Then by Lemma \ref{lem:Ideal_in_Irred} $IV\ne V$, and
\[
 \dbl u,v\dbr = \sum\limits_{i,j=1}^n e_{ij}(u)\otimes R(e_{ji})(v) \in V\otimes IV
\]
for all $u,v\in V$.
Therefore, $\dbl V,V\dbr \subset V\otimes IV$,
so $IV$ is a proper ideal of the double algebra $V$ (it is nonzero since $R=0$ otherwise).

Thus, $R(A)=B$ and  $R(A)=R^*(A)=B$. 
In this case, $R$ is a (two-sided) averaging operator on $A$, and it is easy to see that 
\[
J= \{R(x)-R^*(x) \mid x\in A \}
\]
is a (two-sided) ideal of $B$. However, $B$ is a simple algebra, so either $J=0$ or $J=B$. 
The latter case is impossible since $R-R^*$ is a Rota---Baxter operator by Corollary \ref{cor:AveRB_op}, 
but the image of a Rota---Baxter operator may not contain the identity of $A$ (so is $B$). 
Hence, $R=R^*$ and $V$ is commutative.
\end{proof}

\begin{rem}
There are two principal types of simple finite-dimensional double commutative algebras described
in terms of their corresponding symmetric averaging operators. It is easy to see from Lemma \ref{lem:GeneralProp_of_R,R^*}
that either $R(A)\cap \Ker R=0$ or $R(A)\cap \Ker R=R(A)$. 
In the first case, $A=R(A)\oplus \Ker R$, and for every $x\in A$ there exist uniquely defined $x_0\in \Ker R$ and $x_1\in R(A)$
such that $x=x_1+x_0$ and $R(x)=x_1u$, where $u=R(1)$ is a central element of $R(A)$. In particular, $R^2=uR$.
In the second case, $R^2=0$, in particular, $R(1)=0$ which is possible only if the characteristic of $\Bbbk $ divides $\dim V$.
Examples below show these two opportunities.
\end{rem}

\begin{exmp}
Let $\Bbbk =\mathbb R$, $N=2$. Consider the decomposition $M_2(\mathbb R) = E_1\oplus E_0$,
where 
\[
E_1=\left\{\begin{pmatrix}
                          x & y \\ -y & x
                         \end{pmatrix} \mid x,y\in \mathbb R \right\},
\quad 
E_0=
\left\{\begin{pmatrix}
                          x & y \\ y & -x
                         \end{pmatrix} \mid x,y\in \mathbb R \right\}.
\]
With $u=2$, this decomposition determines the following symmetric averaging operator:
\begin{equation}\label{eq:exmp_real}
 R\begin{pmatrix}
   x & y \\ v & w 
  \end{pmatrix} = 
\begin{pmatrix}
 x+w & y-v \\ v-y & x+w
\end{pmatrix}
\end{equation}
such that $R^2=2R$.
The corresponding commutative double algebra structure on $V=\mathbb R^2$ is given by the following multiplication table:
\[
\begin{gathered}
 \dbl e_1,e_1\dbr = -\dbl e_2,e_2\dbr = e_1\otimes e_1 -e_2\otimes e_2, \\
 \dbl e_1,e_2\dbr = e_1\otimes e_2 + e_2\otimes e_1.
\end{gathered}
\]
Let us show that $V$ is simple. Otherwise, there exists a 1-dimensional ideal spanned by $v=\alpha e_1+\beta e_2$, $\alpha,\beta\in \mathbb R$. 
By definition, the functional $\xi\otimes \xi\in (V\otimes V)^*$, $\xi = \beta e_1^* -\alpha e_2^*$, has to annihilate 
$\dbl v,e_1\dbr $ and $\dbl v,e_2\dbr $. This leads to a system of algebraic equations on $\alpha $ and $\beta $
which has only zero solution in $\mathbb R$. Hence, the double algebra determined by the operator \eqref{eq:exmp_real} is simple.
\end{exmp}

\begin{exmp}
Let $\Bbbk =\mathbb Z_2(t)$.
Then 
\[
 R\begin{pmatrix}
   x & y \\ v & w 
  \end{pmatrix} = 
\begin{pmatrix}
 x+w & y+tv \\ 
 t^{-1}y+v & x+w
\end{pmatrix}
\]
is a symmetric averaging operator on $M_2(\Bbbk )$ such that $R^2=0$. 
This operator determines a structure of a simple double commutative algebra on $\Bbbk^2$
(this may be shown in the same way as in the previous example).
\end{exmp}

\subsection*{Acknowledgements}
This work was supported by Russian Science Foundation (Project No. 14-21-00065). 
The authors are grateful to Victor Kac for stating the problem and inspiring discussions of the results.


\end{document}